\begin{document}
\newcommand{\R}{{\mathbb R}}
 \newcommand{\Hi}{{\mathbb H}}
\newcommand{\Ss}{{\mathbb S}}
\newcommand{\N}{{\mathbb N}}
\newcommand{\Rn}{{\mathbb{R}^n}}
\newcommand{\ieq}{\begin{equation}}
\newcommand{\eeq}{\end{equation}}
\newcommand{\ieqa}{\begin{eqnarray}}
\newcommand{\eeqa}{\end{eqnarray}}
\newcommand{\ieqas}{\begin{eqnarray*}}
\newcommand{\eeqas}{\end{eqnarray*}}
\newcommand{\Bo}{\put(260,0){\rule{2mm}{2mm}}\\}
\def\L#1{\label{#1}} \def\R#1{{\rm (\ref{#1})}}


\theoremstyle{plain}
\newtheorem{theorem}{Theorem} [section]
\newtheorem{corollary}[theorem]{Corollary}
\newtheorem{lemma}[theorem]{Lemma}
\newtheorem{proposition}[theorem]{Proposition}


\theoremstyle{definition}
\newtheorem{definition}[theorem]{Definition}
\newtheorem{remark}[theorem]{Remark}

\numberwithin{figure}{section}
\newcommand{\res}{\mathop{\hbox{\vrule height 7pt width .5pt depth
0pt \vrule height .5pt width 6pt depth 0pt}}\nolimits}
\def\at#1{{\bf #1}: } \def\att#1#2{{\bf #1}, {\bf #2}: }
\def\attt#1#2#3{{\bf #1}, {\bf #2}, {\bf #3}: } \def\atttt#1#2#3#4{{\bf #1}, {\bf #2}, {\bf #3},{\bf #4}: }
\def\aug#1#2{\frac{\displaystyle #1}{\displaystyle #2}} \def\figura#1#2{ \begin{figure}[ht] \vspace{#1} \caption{#2}
\end{figure}} \def\B#1{\bibitem{#1}} \def\q{\int_{\Omega^\sharp}}
\def\z{\int_{B_{\bar{\rho}}}\underline{\nu}\nabla (w+K_{c})\cdot
\nabla h} \def\a{\int_{B_{\bar{\rho}}}}
\def\b{\cdot\aug{x}{\|x\|}}
\def\n{\underline{\nu}} \def\d{\int_{B_{r}}}
\def\e{\int_{B_{\rho_{j}}}} \def\LL{{\mathcal L}}
\def\itr{\mathrm{Int}\,}
\def\D{{\mathcal D}}
 \def\tg{\tilde{g}}
\def\A{{\mathcal A}}
\def\S{{\mathcal S}}
\def\H{{\mathcal H}}
\def\M{{\mathcal M}}
\def\T{{\mathcal T}}
\def\U{{\mathcal U}}
\def\N{{\mathcal N}}
\def\I{{\mathcal I}}
\def\F{{\mathcal F}}
\def\J{{\mathcal J}}
\def\E{{\mathcal E}}
\def\F{{\mathcal F}}
\def\G{{\mathcal G}}
\def\HH{{\mathcal H}}
\def\W{{\mathcal W}}
\def\H{\D^{2*}_{X}}
\def\d{d^X_M }
\def\LL{{\mathcal L}}
\def\H{{\mathcal H}}
\def\HH{{\mathcal H}}
\def\itr{\mathrm{Int}\,}
\def\vah{\mbox{var}_\Hi}
\def\vahh{\mbox{var}_\Hi^1}
\def\vax{\mbox{var}_X^1}
\def\va{\mbox{var}}
\def\SS{{\mathcal S}}
 \def\Y{{\mathcal Y}}
\def\length{{l_\Hi}}
\newcommand{\average}{{\mathchoice {\kern1ex\vcenter{\hrule
height.4pt width 6pt depth0pt} \kern-11pt} {\kern1ex\vcenter{\hrule height.4pt width 4.3pt depth0pt} \kern-7pt} {} {} }}

\newcommand{\ave}{\average\int}

\title[Some fourth order nonlinear elliptic problems]{Some fourth order nonlinear elliptic problems related to epitaxial growth}

\author[C. Escudero, I. Peral]{Carlos Escudero, Ireneo Peral}\thanks{Work partially supported by project
MTM2010-18128, MINECO, Spain. C. E. also supported by RYC-2011-09025, MINECO, Spain}

\address{Departamento de Matem\'{a}ticas,  Universidad Aut\'{o}noma de Madrid, 28049 Madrid, Spain.}

\email{}

\keywords{Growth problems, higher order elliptic equations, Gaussian curvature, Monge-Amp\`{e}re type equations, existence of solutions,
variational methods.\\ \indent 2010 {\it MSC:  35J50, 35J60, 35J62, 35J96, 35G20, 35G30.}}

\date{\today}

\begin{abstract}
This paper deals with some mathematical models arising in the theory of epitaxial growth of crystal. We focalize the study on a stationary
problem which presents some analytical difficulties. We study the existence  of solutions. The central model in this work is given by the
following fourth order elliptic equation,
$$\begin{array}{rclll}
\Delta^2 u=\text{det} \left( D^2 u \right) &+&\lambda f, \quad & x\in \Omega\subset\mathbb{R}^2\\ \hbox{ conditions  on} &\quad&
 & \partial \Omega.
\end{array}
$$
The framework to study the problem deeply depends on the boundary conditions.
\end{abstract}

\maketitle


\rightline{\it To the memory of James Serrin.}

\section{Introduction}

In this work we are concerned with the stationary version of (\ref{parabolic2}) below, which reads
\begin{equation}\label{Pro0}
\left\{\begin{array}{rcl} \Delta^2 u&=&\text{det} \left( D^2 u \right) +\lambda f, \qquad x\in \Omega\subset\mathbb{R}^2, \\
\text{boundary}&\,& \text{ conditions,}
\end{array}
\right.
\end{equation}
where $\Omega$ has smooth boundary, $n$ is the unit outward normal to $\partial \Omega$,  $f$ is a function with a suitable hypothesis of
summability and $\lambda>0$. We will concentrate on Dirichlet boundary conditions, that is,
\begin{equation}\label{dir}
u=0, \quad \dfrac{\partial u}{\partial n} =0\hbox{ on }\partial \Omega
\end{equation}
and Navier conditions
\begin{equation}\label{nav}
u=0, \quad  \Delta u=0\hbox{ on }\partial \Omega,
\end{equation}

This type of  problems appears in a model of {\it epitaxial growth}.

Epitaxial growth is characterized by the deposition of new material on existing layers of the same material under high vacuum conditions.
This technique is used in the semiconductor industry for the growth of thin films~\cite{barabasi}. The mathematical description of epitaxial
growth uses the function
\begin{equation}
u: \Omega \subset \mathbb{R}^2 \times \mathbb{R}^+ \rightarrow \mathbb{R},
\end{equation}
which describes the height of the growing interface at the spatial point $x \in \Omega \subset \mathbb{R}^2$ at time $t \in \mathbb{R}^+$. A
basic modelling assumption is of course that $u$ is an univalued function, a fact that holds in a reasonably large number of
cases~\cite{barabasi}. The macroscopic description of the growing interface is given by a partial differential equation for $u$ which is
usually postulated using phenomenological and symmetry arguments~\cite{barabasi,marsili}.

We will focus on one such an equation that was derived in the context of non-equilibrium surface growth~\cite{escudero}. It reads
\begin{equation} \label{parabolic2}
u_t = 2 \, K_1 \, \det \left( D^2 u \right) - K_2 \, \Delta^2 u + \xi(x,t).
\end{equation}
The field $u$ evolves in time as dictated by three different terms: linear, nonlinear and non-autonomous ones. Both linear and nonlinear
terms describe the dynamics on the interface. The nonlinear term is the small gradient expansion (which assumes $|\nabla u| \ll 1$) of the
Gaussian curvature of the graph of $u$. The linear term is the bilaplacian, that can be considered as the linearized Euler-Lagrange equation
of the Willmore functional~\cite{hornung}. This is in fact, see below, the proper interpretation in this context. Finally, the non-autonomous
term takes into account the material being deposited on the growing surface. We can consider this equation as a sort of Gaussian curvature
flow~\cite{chow,andrews} which is stabilized by means of a higher order viscosity term. The derivation of this equation was actually
geometric: it arose as a gradient flow pursuing the minimization of the functional
\begin{equation}\label{funcional de partida}
\mathcal{V}(u)= \int_\Omega \left( K_1 H + \frac{K_2}{2} H^2 \right)\sqrt{1+|\nabla u|^2}\, dx,
\end{equation}
where $H$ is the mean curvature of the graph of $u$, in the context of non-equilibrium statistical mechanics of surface
growth~\cite{marsili,escudero}. The actual terms on the right hand side of~(\ref{parabolic2}) are found after formally expanding the
Euler-Lagrange equation corresponding to this functional for small values of the different derivatives of $u$ and retaining only linear and
quadratic terms. This type of derivation does not rely on a detailed modelling of the processes taking place in a particular physical system.
Instead, because its goal is describing just the large scale properties of the growing interface, it is based on rather general
considerations. This is the usual way of reasoning in this context, in which equations are derived as representatives of universality
classes, this is, sets of physical phenomena sharing the same large scales properties~\cite{barabasi}. In this sense,
equation~(\ref{parabolic2}) has been claimed to be an exact representative of a well known universality class within the realm of
non-equilibrium growth~\cite{escudero2}. In particular, frequently used models of epitaxial growth belong to this universality
class~\cite{escudero}.

 We will devote this work to obtain some results on existence and multiplicity of solutions to  the fourth order elliptic problem
 \eqref{Pro0}.
  One of the main consequences of this work is to exhibit the dependence on the boundary conditions, even in  the formulation of the
  problems.

More precisely the organization of the paper is as follows.

In Section \ref{preliminar} we formulate some known results on properties of the Hessian of a function in the Sobolev space
$W^{2,2}(\mathbb{R}^N)$, that will be used in the article.

Section \ref{existenceresults} is devoted to the variational formulation of the problem with Dirichlet boundary conditions and proving, via
critical points arguments, existence and multiplicity of solutions. We will use some arguments on minimization and  a version by Ekeland, in
\cite{E} (see also \cite{AE}) of the Ambrosetti-Rabinowitz {\it Mountain Pass Theorem} (see \cite{AR}).

The Navier   boundary conditions, as far as we know, have not a variational formulation. In Section \ref{nbc} we prove existence of solution
for the problem with Navier conditions by using some fixed point arguments. Finally, in Section \ref{further}, we present some related
remarks and open problems.

\section{Preliminaries.Functional setting}\label{preliminar}

If  $v$ is a smooth function we have the following chain of equalities,
\begin{eqnarray*}
 &\text{det} \left( D^2 v \right) = v_{x_1 x_1}v_{x_2 x_2}- v_{x_1 x_2}^2=
(v_{x_1}v_{x_2 x_2})_{x_1}-(v_{x_1}v_{x_2 x_1})_{x_2}=\\
 &(v_{x_1}v_{x_2})_{x_1 x_2}-\frac 12 (v_{x_2}^2)_{x_1x_1}-\frac 12 (v_{x_1}^2)_{x_2 x_2}.
\end{eqnarray*}
From now on we will assume $\Omega \subset \mathbb{R}^2$ is open, bounded and has a smooth boundary. Notice that by density we can consider
the above identities in $\mathcal{D}'(\Omega)$, the space of distributions. This  subject is deeply  related with a conjecture by J. Ball \cite{ball}:

{\it  If    $ u=(u^1,u^2)\in W^{1,p}(\Omega, \mathbb{R}^2)$  consider $\textrm{det} \left( D u \right)=u^1_{x_1}u^2_{x_2}-u^1_{x_2}u^2_{x_1}$
and define $$\textrm{Det}(D u)=(u^1u^2_{x_2})_{x_1}-(u^1u^2_{x_1})_{x_2}.$$

 When is it true that  $\textrm{det} \left( D u \right) = \textrm{Det} \left( D u \right) ?$}

A positive answer, among others results,  was given by   S. M\"{u}ller in \cite{muller}.

We will use Theorem VII.2, page 278  in \cite{coifman}, that we formulate as follows.
\begin{lemma}\label{clms}
Let $v \in W^{2,2}(\mathbb{R}^2)$. Then,
$$\text{det} \left(D^2 v \right),$$
$$(v_{x_1}v_{x_2 x_2})_{x_1}-(v_{x_1}v_{x_2 x_1})_{x_2}$$
and
$$(v_{x_1}v_{x_2})_{x_1 x_2}-\frac 12 (v_{x_2}^2)_{x_1 x_1}-\frac 12 (v_{x_1}^2)_{x_2 x_2}$$
belong to the space $\mathcal{H}^1(\mathbb{R}^2)$ and are equal in it, where $\mathcal{H}^1(\mathbb{R}^2)$ is the Hardy space.
\end{lemma}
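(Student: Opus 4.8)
The plan is to prove Lemma~\ref{clms} by first establishing the three pointwise (distributional) identities for smooth functions, then passing to the limit by density in $W^{2,2}(\R^2)$, and finally invoking the div-curl / Jacobian structure theory of Coifman, Lions, Meyer and Semmes to place each expression in the Hardy space $\H^1(\R^2)$ with control of the norm.

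\textbf{Step 1: the algebraic identities.} For $v\in C^\infty$ the chain of equalities displayed just before the lemma shows
$$\det(D^2 v)=(v_{x_1}v_{x_2 x_2})_{x_1}-(v_{x_1}v_{x_2 x_1})_{x_2}=(v_{x_1}v_{x_2})_{x_1 x_2}-\tfrac12(v_{x_2}^2)_{x_1 x_1}-\tfrac12(v_{x_1}^2)_{x_2 x_2},$$
all three being ordinary functions that agree pointwise. I would then observe that each of the three expressions, read as a distribution, depends continuously on $v$ in the $W^{2,2}$ topology: the first is bilinear in the entries of $D^2v$ (so bounded in $L^1$ by $\|v\|_{W^{2,2}}^2$), while the second and third are second derivatives of products of a first derivative of $v$ with a second or first derivative of $v$, each such product being bounded in $L^1$ (using Sobolev embedding $W^{2,2}(\R^2)\hookrightarrow W^{1,q}$ for all $q<\infty$, or more simply $W^{1,2}\hookrightarrow L^q$, so that $v_{x_i}v_{x_j x_k}\in L^1$ and $v_{x_i}^2\in L^1$). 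Hence all three are well-defined distributions for $v\in W^{2,2}(\R^2)$, and since smooth compactly supported functions are dense and the identities are preserved under the (continuous, linear) operations of differentiation and under the (continuous, bilinear) products above, the three distributions coincide in $\mathcal D'(\R^2)$ for every $v\in W^{2,2}(\R^2)$.

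\textbf{Step 2: membership in $\H^1(\R^2)$.} Here I would use the second representation $(v_{x_1}v_{x_2 x_2})_{x_1}-(v_{x_1}v_{x_2 x_1})_{x_2}$, which is exactly a divergence-free-type combination: writing $E=(v_{x_1}v_{x_2 x_2},\,-v_{x_1}v_{x_2 x_1})$ one recognizes $\det(D^2v)=\operatorname{div} E$ where, more usefully, one sees the Jacobian structure $\det(D^2 v)=\det D(\nabla v)=\partial_{x_1}(v_{x_1})\partial_{x_2}(v_{x_2})-\partial_{x_2}(v_{x_1})\partial_{x_1}(v_{x_2})$, i.e. the Jacobian of the map $\nabla v=(v_{x_1},v_{x_2})$, whose components lie in $W^{1,2}(\R^2)$. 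The Coifman--Lions--Meyer--Semmes theorem (Theorem~VII.2 of \cite{coifman}) states precisely that the Jacobian determinant of a map in $\dot W^{1,2}(\R^2,\R^2)$ belongs to $\H^1(\R^2)$ with $\|\det D(\nabla v)\|_{\H^1}\le C\|\nabla v\|_{\dot W^{1,2}}^2=C\|D^2v\|_{L^2}^2$. This gives the Hardy-space membership of the first expression; the other two then also lie in $\H^1$ because they are \emph{equal} to it as distributions by Step 1.

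\textbf{Main obstacle and remaining care.} The only genuinely nontrivial input is the div-curl/Jacobian theorem itself, which is quoted rather than reproved; the work on our side is the bookkeeping that justifies quoting it, namely (a) checking that the density argument in Step~1 is legitimate — the subtlety being that a priori the three expressions are merely distributions, so one must verify the relevant products are in $L^1$ (hence genuine distributions that behave well under limits) rather than only defined for smooth $v$ — and (b) matching our ``$\det(D^2v)$'' with the ``Jacobian of a $W^{1,2}$ map'' hypothesis of \cite{coifman}, which is immediate once one writes $D^2v=D(\nabla v)$ and notes $\nabla v\in W^{1,2}(\R^2,\R^2)$. I would also remark, as the authors implicitly will, that this is the $2$-dimensional instance where the compensated-compactness gain (from merely $L^1$ to $\H^1$) is available, which is exactly why the quadratic nonlinearity $\det(D^2u)$ is tractable in the variational setting of the later sections. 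No delicate estimate beyond these is needed: Step~2's norm bound $\|\det(D^2v)\|_{\H^1}\le C\|D^2v\|_{L^2}^2$ is inherited directly from \cite{coifman}.
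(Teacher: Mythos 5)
Your proposal is correct and follows essentially the same route as the paper: the authors record the algebraic identities for smooth $v$, note that they pass to $\mathcal{D}'$ by density, and then simply cite Theorem VII.2 of \cite{coifman} for the Hardy-space membership --- exactly the Jacobian/div-curl input you invoke after writing $\det(D^2 v)$ as the Jacobian of $\nabla v\in W^{1,2}(\mathbb{R}^2,\mathbb{R}^2)$. The only difference is one of emphasis: the paper treats the whole lemma as a restatement of the cited theorem and gives no further argument, whereas you reconstruct the (correct) reduction to it, including the $L^1$ bounds on the products that legitimize the density step.
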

All the expressions  involving third derivatives are understood in the distributional sense; then the result in the lemma is highly
non-trivial  and for the proof we refer to \cite{coifman}. It is interesting to point out that this result deeply depends on Luc Tartar and Fran\c{c}ois Murat arguments on {\it
compactness by compensation} \cite{tartar1,tartar2} and \cite{murat} respectively.

For the reader convenience we recall the definition of the Hardy space in $\mathbb{R}^N$ (see E. M. Stein G. Weiss \cite{SteWe}).

\begin{definition}\label{hardyspace}  The Hardy space in $\mathbb{R}^N$ is defined in an equivalent way as follows
$$\begin{array}{rcl}
\mathcal{H}^1(\mathbb{R}^N)&=&\{ f\in L^1(\mathbb{R}^N)\,|\, R_j(f)\in L^1(\mathbb{R}^N),\, j = 1, 2, \cdots, N\}=\\ &&\{ f\in
L^1(\mathbb{R}^N)\,|\,\sup|f*h_t(x)|\in L^1(\mathbb{R}^N)\},
\end{array}$$
where $R_j$ is the classical Riesz transform, that is,
$$R_j=\frac{\partial}{\partial x_j}(-\Delta )^{\frac 12}, \,\, j = 1, 2, \cdots, N,$$
and
$$h_t(x)=\frac 1{t^N} \, h \left(\frac{x}{t} \right),\hbox{  where   } h\in \mathcal{C}_0^\infty( \mathbb{R}^N),\,\, h(x)\ge 0 \hbox{ and
}\int_{\mathbb{R}^N} h \, dx =1.$$
\end{definition}
Notice  that, as a direct consequence of the the definition, if $f\in \mathcal{H}^1(\mathbb{R}^N)$ and $E_{1,N}(x)$ is the fundamental
solution to the Laplacian  in  $\mathbb{R}^N$, then
$$u(x)=\int_{\mathbb{R}^N} E_{1,N}(x-y)f(y)dy $$
verifies that $u\in W^{2,1}(\mathbb{R}^N)$. See for instance \cite{Stein}.

In a similar way if  we consider $E_{2,N}(x)$, the fundamental solution to $\Delta^2$  in  $\mathbb{R}^N$, a direct calculation shows that
$D^{\alpha}E(x)$, $|\alpha|=4$,  are of the form
$$D^{\alpha}E_{2,N}(x)=\dfrac {H_\alpha(\bar{x})}{|x|^N}, \quad \bar{x}=\dfrac{x}{|x|}$$
where $H_\alpha$ is a positively homogeneous function of zero degree and
$$\int_{S^{N-1}} H_\alpha(\bar{x}) d \bar{x}=0,$$
that is,  $D^{\alpha}E_{2,N}(x)$ is a classical Calderon-Zygmund kernel. For $f\in \mathcal{H}^1(\mathbb{R}^N)$  consider
$$u(x)=\int_{\mathbb{R}^N} E_{2,N}(x-y)f(y)dy, $$
then $u\in W^{4,1}(\mathbb{R}^N)$.

D. C. Chang, G. Dafni, and E. M. Stein in \cite{D} give the definition  of Hardy space in a bounded domain $\Omega$ in order to have the
regularity theory for the Laplacian similar to the one in $\mathbb{R}^N$.

The extension of this kind of regularity result  to the bi-harmonic  equation on bounded domains is of interest for the current problem. The
application of such a result would allow obtaining extra regularity in the nonlinear setting studied in this work. We leave these questions
as a subject of future research.

We use the following consequence which is a by product of the results in \cite{coifman} and in \cite{D}.
\begin{lemma}\label{smomega}
Let $u \in W^{2,2}_0(\Omega)$. Then
$$
\text{Det} \left(D^2 u \right)=(u_{x_1}u_{x_2})_{x_1 x_2}-\frac 12 (u_{x_2}^2)_{x_1 x_1}-\frac 12 (u_{x_1}^2)_{x_2 x_2}
$$
in $L^1(\Omega)\cap h^1_r(\Omega)$. Here $h^1_r(\Omega)$ is the class of function restrictions of  $\mathcal{H}^1(\mathbb{R}^N)$ to
$\Omega$.
\end{lemma}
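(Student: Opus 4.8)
The plan is to reduce Lemma \ref{smomega} to the statements already available, namely Lemma \ref{clms} for functions on all of $\mathbb{R}^2$ and the local Hardy space theory of \cite{D}. The key point is that $u \in W^{2,2}_0(\Omega)$ can be extended by zero to a function $\tilde u \in W^{2,2}(\mathbb{R}^2)$, because the zero extension of a $W^{2,2}_0(\Omega)$ function across a smooth boundary again lies in $W^{2,2}(\mathbb{R}^2)$ (the first derivatives lie in $W^{1,2}_0(\Omega)$, so their zero extensions are again $W^{1,2}(\mathbb{R}^2)$, and one checks no distributional mass is created on $\partial\Omega$). Thus $\tilde u$ is a legitimate input to Lemma \ref{clms}.

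First I would apply Lemma \ref{clms} to $\tilde u$: it gives that
$$
\det(D^2 \tilde u) = (\tilde u_{x_1}\tilde u_{x_2})_{x_1 x_2} - \tfrac12 (\tilde u_{x_2}^2)_{x_1 x_1} - \tfrac12 (\tilde u_{x_1}^2)_{x_2 x_2}
$$
as an identity in $\mathcal{H}^1(\mathbb{R}^2)$. Next I would restrict this identity to $\Omega$. Since $\tilde u = u$ on $\Omega$ and the operators involved are local differential operators, on $\Omega$ we have $\det(D^2\tilde u)|_\Omega = \det(D^2 u)$ and the right-hand side restricts to $(u_{x_1}u_{x_2})_{x_1 x_2} - \tfrac12 (u_{x_2}^2)_{x_1 x_1} - \tfrac12 (u_{x_1}^2)_{x_2 x_2}$, where the third derivatives are taken in $\mathcal{D}'(\Omega)$; this common distribution is by definition $\mathrm{Det}(D^2 u)$. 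The $L^1(\Omega)$ membership is immediate since $\det(D^2 u) = u_{x_1x_1}u_{x_2x_2} - u_{x_1x_2}^2 \in L^1(\Omega)$ by the Sobolev embedding $W^{2,2} \hookrightarrow$ (second derivatives in $L^2$, so products in $L^1$). Finally, membership in $h^1_r(\Omega)$ follows because the restriction to $\Omega$ of an $\mathcal{H}^1(\mathbb{R}^2)$ function is, by definition, an element of the restriction class $h^1_r(\Omega)$ of \cite{D}; one should remark that this is exactly the sense in which the local Hardy space regularity theory for $\Delta^2$ on $\Omega$ becomes applicable.

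The main obstacle — and the step deserving the most care — is verifying that zero extension of $W^{2,2}_0(\Omega)$ into $W^{2,2}(\mathbb{R}^2)$ is valid, i.e. that no singular distributional contribution (a surface layer on $\partial\Omega$) is produced in the second derivatives; this is where smoothness of $\partial\Omega$ and the definition of $W^{2,2}_0$ as the closure of $C_c^\infty(\Omega)$ are used, approximating $u$ by $\varphi_k \in C_c^\infty(\Omega)$ whose zero extensions trivially lie in $C_c^\infty(\mathbb{R}^2)$ and pass to the limit in $W^{2,2}(\mathbb{R}^2)$. A secondary, more bookkeeping-type point is checking that the bilinear expressions $u_{x_1}u_{x_2}$ and $u_{x_i}^2$ behave well under restriction, which is immediate since they are genuine $L^1(\Omega)$ (indeed $W^{1,1}(\Omega)$, using $W^{2,2}\hookrightarrow W^{1,q}$ for all finite $q$ in dimension two) functions, so that the distributional derivatives commute with restriction to the open set $\Omega$.
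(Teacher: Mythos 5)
Your argument is correct and is essentially the paper's own route: the paper states this lemma without proof, as a direct byproduct of the results in \cite{coifman} (i.e.\ Lemma~\ref{clms}) and the local Hardy space framework of \cite{D}, and your zero-extension of $u\in W^{2,2}_0(\Omega)$ to $W^{2,2}(\mathbb{R}^2)$ followed by restriction to $\Omega$ is precisely the standard way to fill in that derivation. The points you flag for care (no surface layer created on $\partial\Omega$, locality of the distributional derivatives under restriction) are the right ones and are handled correctly.
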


\section{Variational settings: existence  and multiplicity results for the Dirichlet conditions}
\label{existenceresults}

We will study the following problem,
\begin{equation}\label{ProD}
\left\{\begin{array}{rcl} \Delta^2 u&=&\text{det} \left( D^2 u \right) +\lambda f, \qquad x\in \Omega\subset\mathbb{R}^2, \\ \text{u=0}, &\,&
\dfrac{\partial u}{\partial n}=0\hbox{ on }\partial \Omega,
\end{array}
\right.
\end{equation}
this is, Dirichlet boundary conditions, where $\Omega$ is a bounded domain with smooth boundary and $f\in L^1(\Omega)$. The natural framework
is the space $W^{2,2}_0(\Omega)$, that is the completion of $\mathcal{C}_0^\infty (\Omega)$ with the norm of $W^{2,2}(\Omega)$. This fact  is
the key to have in this case a variational formulation of the problem. We recall that the norm of the Hilbert space $W^{2,2}_0(\Omega)$ is
equivalent to the norm $\|\Delta u\|_2$. For all the functional framework we refer the reader to the precise, detailed and very nice
monograph by F. Gazzola, H. Grunau and G. Sweers, \cite{GGS}.  See \cite{ADN1} and \cite{ADN2} as classical references for elliptic equations
of higher order.
\begin{remark} We could consider the inhomogeneous Dirichlet problem,
\begin{equation}\label{ProDI}
\left\{\begin{array}{rcl} \Delta^2 u&=&\text{det} \left( D^2 u \right) +\lambda f, \qquad x\in \Omega\subset\mathbb{R}^2, \\ u=\mu_1, &\,&
\dfrac{\partial u}{\partial n}=\mu_2\hbox{ on }\partial \Omega,
\end{array}
\right.
\end{equation}
where $\mu_i$, $i=1,2$ are in  suitable trace spaces. Considering the bi-harmonic function $w$ with data $\mu_1$ and $\mu_2$ and defining
$v=u-w$ we reduce the problem to the homogeneous  case with a linear perturbation and a new source term, that is,
$$
\left\{\begin{array}{lll} \Delta^2 v=\text{det} \left( D^2 v \right) +w_{yy}v_{xx}+w_{xx}v_{yy}-2w_{xy}v_{xy} + \text{det}\left( D^2 w \right) + \lambda f, \, x\in \Omega\subset\mathbb{R}^2, \\
\text{v=0}, \, \dfrac{\partial v}{\partial n}=0\hbox{ on }\partial \Omega.
\end{array}
\right.
$$
This problem can be solved by using  fixed point arguments for small data. See the next Section \ref{nbc}. The smallness of the data is only known so far in the radial framework. See \cite{Los4}.
\end{remark}

We try to find a Lagrangian $L(\nabla u, D^2 u)$ such that the critical points of the functional
\begin{equation}\label{D}
\left\{\begin{array}{rcl} J_\lambda: &W^{2,2}_0(\Omega)&\rightarrow \mathbb{R} \\& u& \rightarrow J_\lambda(u)=\displaystyle
\frac12\int_\Omega |\Delta u|^2 \, dx -\int_\Omega L(\nabla u, D^2 u) \, dx
\end{array}
\right.
\end{equation}
are solutions to \eqref{ProD}.

\subsection{Lagrangian for the Dirichlet conditions}

We will try to obtain a Lagrangian for which the Euler  first variation is the determinant of the Hessian matrix. Our ingredients will be the
distributional identity
$$\text{det} \left(D^2 v \right)=(v_{x_1}v_{x_2})_{x_1 x_2}-\frac 12 (v_{x_2}^2)_{x_1 x_1}-\frac 12 (v_{x_1}^2)_{x_2 x_2}$$
and the fact that $\mathcal{C}^\infty_0(\Omega)$ is dense in $W^{2,2}_0(\Omega)$.

Consider $\phi\in\mathcal{C}_0^\infty(\Omega)$ and
\begin{eqnarray} \nonumber
\int_\Omega \det \left( D^2 u \right) \, \phi \,\, dx &=&\int_\Omega \left[ -\frac12 \, (u_{x_2}^2)_{x_1 x_1} - \frac12 \,
(u_{x_1}^2)_{x_2 x_2} + (u_{x_1} u_{x_2})_{x_1 x_2} \right] \phi \,\, dx \nonumber \\ \nonumber \\ \nonumber &=& \int_\Omega \left[ \frac12
\, \phi_{x_1} (u_{x_2}^2)_{x_1} + \frac12 \, \phi_{x_2} (u_{x_1}^2)_{x_2} + u_{x_1} u_{x_2} \phi_{x_1 x_2} \right] dx
\\ \nonumber \\
&=&\nonumber \left. \frac{d  }{dt}G(u + t \phi) \right|_{t=0},
\end{eqnarray}
where
\begin{equation}
G(u) := \displaystyle \int_\Omega u_{x_1} u_{x_2} u_{x_1 x_2} \, dx.
\end{equation}
Notice that by density we can take $\phi \in W^{2,2}_0(\Omega)$ and by direct application of Lemma 2.1 above we find that the first variation
of  $G(u)$ on $W^{2,2}_0(\Omega)$ is
$$\dfrac{\delta G(u)}{\delta u}= \det\left( D^2 u \right).$$

Then we will consider as {\it energy functional} for problem \eqref{ProD} the following one
\begin{equation}\label{energy}
J_\lambda(u)=\displaystyle \frac12\int_\Omega |\Delta u|^2 \, dx -\int_\Omega u_{x_1} u_{x_2} u_{x_1 x_2} \, dx - \lambda \int_\Omega f u \,
dx,
\end{equation}
defined in   $W^{2,2}_0(\Omega)$.

As we will see $J_\lambda$ is unbounded from below and then we cannot use standard minimization results but the general theory of critical
points of functionals.

\begin{remark} Notice that this Lagrangian is not useful for other boundary conditions. Indeed, consider $\phi\in \mathcal{X}=\{\phi\in
\mathcal{C}^\infty(\Omega)\,|\, \phi(x)=0 \hbox{   on   } \partial\Omega\}$ and $u$ a smooth function, then
$$\begin{array}{rcl}
\dfrac{d}{dt} G(u+t\phi)|_{t=0}&=&\displaystyle\int_\Omega \big(  u_{x_1} u_{x_2} \phi_{x_1 x_2}+u_{x_1} \phi_{x_2} u_{x_1 x_2}+\phi_{x_1}
u_{x_2} u_{x_1 x_2} \big)\,dx\\ &&\\
                                      &=&\displaystyle\int_\Omega \det\left( D^2 u \right)\phi\, dx-\dfrac 12\int_{\partial\Omega}
                                      u_{x_1}u_{x_2}\big(\phi_{x_1} \nu_{x_2} +
                                      \phi_{x_2} \nu_{x_1}\big)ds,
\end{array}
$$
and therefore for $\phi\in \mathcal{X}$ the boundary term does not cancel.

This observation justifies  the dependence of the problem on the boundary conditions.
\end{remark}

\subsection{The geometry of $J_\lambda$}
\label{geom}

Notice that by H\"older and Sobolev inequalities we find the following estimate
$$\begin{array}{lllllllll} & \displaystyle J_\lambda(u)\ge
\\ & \displaystyle \quad \\ & \displaystyle \frac 12 \int_\Omega
|\Delta u|^2 \, dx -\left(\int_\Omega |u_{x_1 x_2}|^2 \,dx \right)^{\frac 12} \left(\int_\Omega |u_{x_1}|^4 \, dx \right)^{\frac 14}
\left(\int_\Omega |u_{x_2}|^4 \, dx \right)^{\frac 14} \\ & \displaystyle \quad \\ & \displaystyle -\lambda ||f||_1||u||_\infty \ge \\ &
\displaystyle \quad \\ & \displaystyle \frac 12 \int_\Omega |\Delta u|^2 \, dx -c_1 \left( \int_\Omega |\Delta u|^2 \, dx \right)^{\frac 32}
-\lambda c_2||f||_1 \left(\int_\Omega |\Delta u|^2 \, dx \right)^{\frac 12}
\\ & \displaystyle \quad \\ & \displaystyle \equiv g \left(||\Delta u||_2 \right),
\end{array}$$
where
\begin{equation}\label{g}
g(s)= \frac 12 \, s^2 - c_1 \, s^3 - \lambda \, c_2 \,||f||_1 \,  s.
\end{equation}
Therefore we easily prove that for $0<\lambda<\lambda_0$ small enough, the {\it radial lower estimate (in the Sobolev space),} given by $g$
has a negative local minimum and a positive local maximum. Moreover, it is easy to check that:
\begin{enumerate}
\item There exists a function $\phi\in W^{2,2}_0(\Omega)$ such that
$$\int_\Omega f\phi \, dx >0.$$
\item There exists a function $\psi\in W^{2,2}_0(\Omega)$ such that
$$\int_\Omega \psi_{x_1} \psi_{x_2} \psi_{x_1 x_2}\, dx >0.$$
\end{enumerate}
For the function $\phi$ we just need it to be a local mollification of $f$. The case of $\psi$ is a bit more involved but one still has many
possibilities such as $$\psi=[(1-|x|^2)^+]^4,$$ where $|x|=\sqrt{x_1^2 + x_2^2}$ and $(\cdot)^+=\max\{\cdot,0\}$, that fulfils the positivity
criterion even pointwise in a domain containing the unit ball. Then, in general, if $B_{2r}(x_0)\subset\Omega$ we consider
$\psi_\Omega(x)=\psi(\dfrac{x-x_0}{r})$.

Other suitable functions can be found by means of deforming this one adequately. Notice that the $\psi$ function we have chosen is in
$C^2(\mathbb{R}^2)$.

According to the previous remark we find that
$$J_\lambda(t\phi)<0 \hbox{ for $t$ small enough and } J_\lambda(s\psi)<0 \hbox{ for $s$ large enough}.$$
\begin{figure}\label{fig:2.1}
\begin{center}
\includegraphics[scale=0.3]{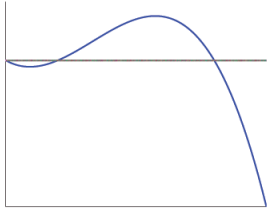}
\end{center}
\caption{The {\it radial profile} and the properties proven in the text show that the mountain pass geometry holds for $J_\lambda$ if
$\lambda>0$ is small enough. Moreover a {\it local} minimum could be found.}
\end{figure}
This behavior and the radial minorant (see Figure 3.1), suggest a kind of {\it mountain pass} geometry. See the classical paper by A.
Ambrosetti and P.~H. Rabinowitz, \cite{AR}.

\subsection{Palais-Smale condition for  $J_\lambda$}
\label{topol}

As usual,  we call $\{u_k\}_{k\in \mathbb{N}}\subset W^{2,2}_0(\Omega)$ a Palais-Smale sequence for $J_\lambda$ to the level $c$ if
\begin{itemize}
\item[i)] $J_\lambda(u_k)\to c$ as $k\to\infty$ \item[ii)] $J_\lambda'(u_k)\to 0$ in $W^{-2,2}(\Omega)$.
\end{itemize}
We say that $J_\lambda$ satisfies the local Palais-Smale condition to the level $c$ if each Palais-Smale sequence to the level $c$,
$\{u_k\}_{k\in \mathbb{N}}$, admits a strongly convergent subsequence in $W^{2,2}_0(\Omega).$

We are able to prove the following compactness  result.
\begin{lemma}\label{lemma:PS}
Assume a bounded Palais-Smale condition for $J_\lambda$, that is $\{u_k\}_{k\in \mathbb{N}}\subset W^{2,2}_0(\Omega)$ verifying
\begin{enumerate}
\item $J_\lambda(u_k)\to c$ as $k\to\infty$, \item $J_\lambda'(u_k)\to 0$ in $W^{-2,2}$.
\end{enumerate}
Then there exists a subsequence $\{u_k\}_{k\in \mathbb{N}}$ that converges in $W^{2,2}_0(\Omega)$.
\end{lemma}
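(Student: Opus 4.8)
The plan is to exploit the Hilbert space structure of $W^{2,2}_0(\Omega)$ together with the fact that, after the integration by parts built into the first variation of $G$, the nonlinear part of $J_\lambda$ is of \emph{lower order} and therefore compact, by Rellich's theorem in dimension two. First I would pass to a subsequence. Since $\{u_k\}$ is assumed bounded in the Hilbert space $W^{2,2}_0(\Omega)$, after extraction we may assume $u_k \rightharpoonup u$ weakly in $W^{2,2}_0(\Omega)$. As $\Omega\subset\mathbb{R}^2$ is bounded and smooth, the embeddings $W^{2,2}_0(\Omega)\hookrightarrow W^{1,q}(\Omega)$ ($q<\infty$) and $W^{2,2}_0(\Omega)\hookrightarrow C(\overline{\Omega})$ are compact; hence $\nabla u_k\to\nabla u$ strongly in $L^q(\Omega)$ for every finite $q$ (in particular in $L^4$) and $u_k\to u$ uniformly on $\overline{\Omega}$. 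Recalling that $\|\Delta\cdot\|_2$ is an equivalent norm on $W^{2,2}_0(\Omega)$, it suffices to show $\|\Delta(u_k-u)\|_2\to 0$.

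The key algebraic step is the identity
\begin{align*}
\|\Delta(u_k-u)\|_2^2 &= \langle J_\lambda'(u_k),\,u_k-u\rangle + \langle G'(u_k),\,u_k-u\rangle \\
&\quad + \lambda\int_\Omega f\,(u_k-u)\,dx \; - \int_\Omega \Delta u\,\Delta(u_k-u)\,dx,
\end{align*}
where, from the computation in the previous subsection,
\[
\langle G'(u),v\rangle = \int_\Omega \bigl( v_{x_1}u_{x_2}u_{x_1x_2} + u_{x_1}v_{x_2}u_{x_1x_2} + u_{x_1}u_{x_2}v_{x_1x_2}\bigr)\,dx .
\]
I would then check that each of the four terms on the right-hand side tends to $0$:
\begin{enumerate}
\item $\langle J_\lambda'(u_k),u_k-u\rangle\to 0$, since $J_\lambda'(u_k)\to 0$ in $W^{-2,2}(\Omega)$ and $\{u_k-u\}$ is bounded in $W^{2,2}_0(\Omega)$;
\item $\lambda\int_\Omega f\,(u_k-u)\,dx\to 0$, since $f\in L^1(\Omega)$ and $u_k\to u$ uniformly;
\item $\int_\Omega \Delta u\,\Delta(u_k-u)\,dx\to 0$, since $\Delta u\in L^2(\Omega)$ and $\Delta(u_k-u)\rightharpoonup 0$ in $L^2(\Omega)$;
\item $\langle G'(u_k),u_k-u\rangle\to 0$: in the term $\int_\Omega u_{k,x_1}u_{k,x_2}(u_k-u)_{x_1x_2}\,dx$ the factor $u_{k,x_1}u_{k,x_2}$ converges strongly in $L^2(\Omega)$ to $u_{x_1}u_{x_2}$ (product of $L^4$-convergent sequences) while $(u_k-u)_{x_1x_2}\rightharpoonup 0$ in $L^2(\Omega)$; in each of the two remaining terms there is a factor of the form $(u_k-u)_{x_i}$ converging to $0$ strongly in $L^4(\Omega)$, the other two factors being bounded in $L^4(\Omega)$ and $L^2(\Omega)$, so Hölder's inequality with exponents $(4,4,2)$ forces the integral to $0$.
\end{enumerate}

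Combining these four limits gives $\|\Delta(u_k-u)\|_2\to 0$, that is, $u_k\to u$ strongly in $W^{2,2}_0(\Omega)$, as claimed. The only genuinely delicate point is item (4): it is precisely here that the particular algebraic form of the nonlinearity (cubic, yet containing only a \emph{single} second-order derivative) enters, so that after the integration by parts implicit in $G'$ the weakly-but-not-strongly convergent $L^2$ factor is always paired with a strongly convergent one, by the two-dimensional compact embedding $W^{2,2}_0(\Omega)\hookrightarrow W^{1,4}(\Omega)$. Everything else is the standard Hilbert-space Palais--Smale argument, which also explains why here the full Palais--Smale condition reduces to the mere boundedness of the sequence.
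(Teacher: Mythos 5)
Your proof is correct and follows essentially the same route as the paper: pair $J_\lambda'(u_k)$ with $u_k-u$, use the compact embeddings of $W^{2,2}_0(\Omega)$ in two dimensions, and conclude $\|\Delta(u_k-u)\|_2\to 0$. The only (immaterial) difference is in the nonlinear term: the paper bounds $\int_\Omega (u_k-u)\det(D^2u_k)\,dx$ by the uniform convergence of $u_k$ against the $L^1$-bound on $\det(D^2u_k)$, whereas you estimate the equivalent pairing $\langle G'(u_k),u_k-u\rangle$ term by term via the strong $W^{1,4}$ convergence.
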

\begin{proof} Since $\{u_k\}_{k\in \mathbb{N}}\subset W^{2,2}_0(\Omega)$ is bounded,
 up to passing to a subsequence, we have:
\begin{itemize}
\item[$i)$] $u_k\rightharpoonup u$ weakly in $W^{2,2}_0(\Omega)$, \item[$ii)$]  $\nabla u_k\to \nabla u$ strongly in
    $[L^p(\Omega)]^2$ for all $p<\infty$, \item[$iii)$]$u_k\to u$ uniformly in $\Omega$.
\end{itemize}
We could write the condition $J_\lambda'(u_k)\to 0$ in $W^{-2,2}$ as
\begin{equation}\label{aproxsol}
\Delta^2 u_k=\det(D^2 u_k)+ \lambda f+y_k,\,\, u_k\in W^{2,2}_0(\Omega)\hbox{ and   } y_k\to 0 \hbox{  in  } W^{-2,2}(\Omega).
\end{equation}
Notice that multiplying \eqref{aproxsol} by $(u_k-u)$, we have for all fixed $k$
\begin{equation}\label{aproxsol1}
\begin{array}{lll}
\displaystyle \int_\Omega \Delta (u_k) \Delta(u_k-u)  \, dx = \\ \quad \\  \displaystyle \int_\Omega (u_k-u) \, \text{det} \left( D^2 u_k
\right) \, dx+ \lambda \int_\Omega f \, (u_k-u) \, dx + y_k(u_k-u).
\end{array}
\end{equation}
The three terms on the right hand side go to zero as $k\to \infty$ by the convergence properties $i)$ and $iii)$. Moreover adding in both
terms of \eqref{aproxsol1}
$$-\int_\Omega \Delta u \, \Delta(u_k-u)  \, dx=o(1) \quad k\to \infty,$$
we obtain,
$$
\begin{array}{lll}
\displaystyle \int_\Omega\left| \Delta (u_k-u) \right|^2  \, dx =
\\ \quad \\  \displaystyle \int_\Omega (u_k-u) \, \text{det}
\left( D^2 u_k \right) \, dx+ \lambda \int_\Omega f \, (u_k-u) \, dx \, +  \\ \quad
\\ \displaystyle
y_k(u_k-u)-\int_\Omega \Delta u \, \Delta(u_k-u) \, dx.
\end{array}
$$
As a consequence
\begin{equation}\label{PSC}
\int_\Omega\left| \Delta (u_k-u) \right|^2  \, dx \to 0 \hbox{
    as     } k\to \infty,
\end{equation}
that is, $J_\lambda$ satisfies the Palais-Smale condition to the level $c$.
\end{proof}

\subsection{The main multiplicity result}

We now can  prove the existence and multiplicity result.

\begin{theorem}\label{existence}
Let $\Omega\subset\mathbb{R}^2$ be a bounded domain with smooth boundary. Consider $f\in L^1(\Omega)$ and $\lambda>0$. Then there exists a
$\lambda_0$ such that for $0<\lambda<\lambda_0$ problem \eqref{ProD} has at least two solutions.
\end{theorem}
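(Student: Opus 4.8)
The plan is to exhibit the two solutions as critical points of $J_\lambda$ of two distinct types: a local minimum near the origin, and a mountain-pass critical point at a strictly higher energy level. The groundwork for this has essentially been laid in the preceding subsections, so the proof is mostly an assembly argument. First I would fix $\lambda_0>0$ small enough that the cubic minorant $g(s)=\frac12 s^2 - c_1 s^3 - \lambda c_2\|f\|_1 s$ from \eqref{g} has a negative local minimum at some $s_-\in(0,s_+)$ and a positive local maximum at some $s_+$, with $g(s_+)>0$; this is the ``mountain pass geometry'' asserted after \eqref{g}. Consequently, for $0<\lambda<\lambda_0$ the functional $J_\lambda$ satisfies $J_\lambda(u)\ge g(\|\Delta u\|_2)$, so on the sphere $\|\Delta u\|_2=s_+$ we have $J_\lambda(u)\ge g(s_+)=:\alpha>0$, while $J_\lambda(0)=0<\alpha$.

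Next I would produce the first solution as a \emph{local minimum}. Consider the closed ball $\overline{B}_{s_+}=\{u\in W^{2,2}_0(\Omega): \|\Delta u\|_2\le s_+\}$. On this ball $J_\lambda$ is bounded below (again by $g$, which is bounded on $[0,s_+]$), so $m:=\inf_{\overline{B}_{s_+}} J_\lambda$ is finite, and in fact $m<0$ because $J_\lambda(t\phi)<0$ for $t$ small, with $t\phi$ inside the ball (here $\phi$ is the function from item (1) of Subsection~\ref{geom} with $\int_\Omega f\phi\,dx>0$). Since $m<0=g(s_+)\le J_\lambda|_{\partial B_{s_+}}$ would be wrong in sign — more precisely, since $J_\lambda\ge\alpha>0$ on the boundary sphere while $m<0$ — any minimizing sequence stays in the interior, away from $\partial B_{s_+}$. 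By Ekeland's variational principle (the version in \cite{E}, see also \cite{AE}) there is a minimizing Palais--Smale sequence $\{u_k\}$ for $J_\lambda$ at level $m$, bounded in $W^{2,2}_0(\Omega)$ since it lies in $\overline{B}_{s_+}$; by Lemma~\ref{lemma:PS} it has a subsequence converging strongly to some $u_1$, which then satisfies $J_\lambda(u_1)=m<0$ and $J_\lambda'(u_1)=0$. Thus $u_1$ is a first weak solution of \eqref{ProD}, and it lies strictly inside $B_{s_+}$.

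For the second solution I would invoke the Mountain Pass Theorem of Ambrosetti--Rabinowitz \cite{AR}. The geometry is in place: $J_\lambda(0)=0$, $J_\lambda\ge\alpha>0$ on $\{\|\Delta u\|_2=s_+\}$, and by item (2) of Subsection~\ref{geom} there is $\psi\in W^{2,2}_0(\Omega)$ with $\int_\Omega\psi_{x_1}\psi_{x_2}\psi_{x_1x_2}\,dx>0$, whence $J_\lambda(s\psi)\to-\infty$ as $s\to\infty$ (the cubic term dominates the quadratic one with a favourable sign), so some $e:=s_*\psi$ with $\|\Delta e\|_2>s_+$ has $J_\lambda(e)<0<\alpha$. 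Define $c=\inf_{\gamma\in\Gamma}\max_{t\in[0,1]}J_\lambda(\gamma(t))$ over paths $\gamma$ joining $0$ to $e$; then $c\ge\alpha>0$. The Mountain Pass Theorem yields a Palais--Smale sequence at level $c$; it is bounded — this is where the ``bounded Palais--Smale'' hypothesis of Lemma~\ref{lemma:PS} must be verified, and I would check it directly from $J_\lambda(u_k)\to c$ and $\langle J_\lambda'(u_k),u_k\rangle\to0$: subtracting a suitable multiple of the second relation from the first, the cubic term $G(u_k)=\int u_{k,x_1}u_{k,x_2}u_{k,x_1x_2}$, estimated by $\|\Delta u_k\|_2^3$ via Hölder--Sobolev as in Subsection~\ref{geom}, is controlled, and one extracts an a priori bound on $\|\Delta u_k\|_2$ (this is the one genuinely delicate point, since the nonlinearity is cubic and super-quadratic, so boundedness is not automatic and likely uses $\lambda$ small again). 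Granting boundedness, Lemma~\ref{lemma:PS} gives strong convergence to a critical point $u_2$ with $J_\lambda(u_2)=c\ge\alpha>0$. Since $J_\lambda(u_1)=m<0\le 0<\alpha\le J_\lambda(u_2)$, the two critical points are distinct, and both solve \eqref{ProD} in the weak sense; this proves the theorem.

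The step I expect to be the main obstacle is exactly the boundedness of the mountain-pass Palais--Smale sequence: because $L(\nabla u,D^2u)=u_{x_1}u_{x_2}u_{x_1x_2}$ has cubic growth and does \emph{not} satisfy an Ambrosetti--Rabinowitz superlinearity condition in the usual sense (and can have either sign), one cannot derive coercivity of the Palais--Smale sequences for free; one must exploit smallness of $\lambda$ together with the precise scaling of the cubic term against $\|\Delta u\|_2^2$, presumably confining the relevant sequence to a region where $g$ already controls $J_\lambda$ from below. Everything else — Ekeland for the local minimum, the mountain-pass value, distinctness by the sign of the energies — is routine once this bound is secured.
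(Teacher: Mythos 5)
Your overall architecture matches the paper's: a negative local minimum obtained from the sublevel geometry of the radial minorant $g$ (the paper gets it via a truncated functional $F_\lambda$ in the spirit of \cite{gp} rather than by minimizing over the closed ball, but the two devices are interchangeable here), plus a mountain-pass point at a positive level, with distinctness read off from the signs of the critical values. However, you leave a genuine gap at exactly the step you flag as the main obstacle: the boundedness of the Palais--Smale sequence at the mountain-pass level. Your proposed route --- estimating the cubic term $G(u_k)=\int_\Omega u_{k,x_1}u_{k,x_2}u_{k,x_1x_2}\,dx$ by $\|\Delta u_k\|_2^3$ via H\"older--Sobolev --- cannot work: a cubic quantity is not controlled by the quadratic one for large norms, and no smallness of $\lambda$ repairs this. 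Moreover your diagnosis that the nonlinearity ``does not satisfy an Ambrosetti--Rabinowitz condition'' is the opposite of what saves the argument.

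The missing idea is that $G$ is \emph{exactly homogeneous of degree} $3$, so after the integration by parts carried out in the paper one has the Euler identity
\begin{equation*}
\int_\Omega u\,\det\left(D^2u\right)dx \;=\; 3\int_\Omega u_{x_1}\,u_{x_1x_2}\,u_{x_2}\,dx \;=\; 3\,G(u),
\end{equation*}
valid on $W^{2,2}_0(\Omega)$ by Lemma \ref{clms} and density. Hence in the combination $J_\lambda(u_k)-\tfrac13\langle J_\lambda'(u_k),u_k\rangle$ the cubic term cancels \emph{identically}, irrespective of its sign, leaving
\begin{equation*}
c+o(1)\;\ge\;\tfrac16\,\|\Delta u_k\|_2^2-\tfrac13\,\|y_k\|_{W^{-2,2}}\,\|\Delta u_k\|_2-\tfrac23\,\lambda\,C_S\,\|f\|_{L^1}\,\|\Delta u_k\|_2,
\end{equation*}
a quadratic-versus-linear inequality that forces $\|\Delta u_k\|_2$ to be bounded with no further use of the smallness of $\lambda$. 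This is precisely the Ambrosetti--Rabinowitz mechanism with exponent $\theta=3>2$; your concern about the sign of $G$ is immaterial because the cancellation is exact rather than an inequality. Once you insert this identity, the rest of your argument (Ekeland for the minimizing sequence inside the ball, Lemma \ref{lemma:PS} for strong convergence at both levels, distinctness from $m<0<c$) goes through and coincides with the paper's proof.
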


\begin{proof} By the Sobolev embedding theorem the functional
$J_\lambda$ is well defined in $W^{2,2}_0(\Omega)$, is continuous and Gateaux differentiable, and its derivative is weak-* continuous
(precisely the regularity required in the weak version by Ekeland  of the  {\it  mountain pass theorem} in \cite{AE}).

We will try to prove the existence of a solution which corresponds to a negative local minimum of $J_\lambda$ and a solution which
corresponds to a positive mountain pass level of $J_\lambda$.

{\it Step 1.-}  {\it $J_\lambda$ has a local minimum $u_0$, such that $J_\lambda(u_0)<0$.}

We use the ideas in \cite{gp} to solve problems with concave-convex semilinear nonlinearities.

Consider  $\lambda_0>0 $ such that, if $0 < \lambda < \lambda_0$, $g$ attaints its positive maximum at $ r_{max}>0$. Take $r_0$ the lower
positive zero of $g$ and $r_0<r_1<r_{max}<r_2$ such that $g(r_1)>0$, $g(r_2)>0$, where $g$ is defined by \eqref{g}. Now consider a cutoff function
 $$ \tau : \mathbb{R}_+ \rightarrow [0,1],$$
such that $ \tau $ is nonincreasing,   $ \tau \in\mathcal{C}^{\infty} $ and it verifies
$$
\begin{cases}
&\tau (s)=1 \quad \text{if}\quad s \le r_0, \\ &\tau (s)=0 \quad \text{if}\quad s \ge r_1.
\end{cases}
$$
Let $ \Theta (u) = \tau ( \| \Delta u \|_2) $. We consider the truncated functional
\begin{equation}\label{truncado}
F_\lambda(u) = \displaystyle \frac12\int_\Omega |\Delta u|^2 \, dx -\int_\Omega u_{x_1} u_{x_2} u_{x_1 x_2}\Theta (u) \, dx - \lambda
\int_\Omega f u \, dx.
\end{equation}
As above, by H\"older and Sobolev inequalities we see $F(u) \ge h ( \| \Delta u \|_2 ) $, with
$$ h(s)=  \frac 12 \, s^2 - c_1 \, \tau(s) \, s^3 - \lambda \, ||f||_1 \, c_2 \, s.$$
The principal properties of $F$ defined by \eqref{truncado} are listed below.

\begin{lemma}\label{min}
\begin{enumerate}
\item $F_\lambda$ has the same regularity as $J_\lambda$. \item If $F_\lambda(u) < 0 $, then $ \| \Delta u \|_2 <
    r_0 $ , and $F_\lambda(u) = J_\lambda(u) $  if  $ \| \Delta u \|_2 <r_0$. \item Let $m$ be defined by
    $m=\inf\limits_{v\in W^{2,2}_0(\Omega)}F_\lambda(v)$.
\end{enumerate}
Then  $F_\lambda$ verifies a local Palais-Smale condition to the level $m$.
\end{lemma}

\begin{proof}
1) and 2) are immediate. To prove 3), observe that all Palais-Smale sequences of minimizers of  $F_\lambda$,  since $ m< 0 $, must be
bounded.
 Then by Lemma \ref{lemma:PS} we conclude.
\end{proof}
Observe that, by 2), if we find some negative critical value for $F_\lambda$, then we have that $m$ is a negative critical value  of
$J_\lambda$ and  there exist $u_0$ local minimum for $J_\lambda$.

\

{\it Step 2.-} {\it If $\lambda$ is small enough,  $J_\lambda$ has a {\it mountain pass} critical point, $u_*$, such that
$J_\lambda(u_*)>0$.}

By the estimates in subsection \ref{geom},  $J_\lambda$ verifies the geometrical requirements of the Mountain Pass Theorem (see \cite{AR} and
\cite{AE}). Consider $u_0$ the local minimum such that $J_\lambda(u_0)<0$ and consider $v\in W^{2,2}_0(\Omega)$ with $||\Delta v||_2>r_{max}$
and such that $J_\lambda(v)<J_\lambda(u_0)$. We define
$$\Gamma=\{\gamma\in \mathcal{C} \left([0,1], W^{2,2}_0(\Omega) \right)\,|\, \gamma(0)=u_0,\, \gamma(1)=v\},$$ and the minimax value
$$c=\inf_{\gamma\in\Gamma} \max_{t\in[0,1]} J_\lambda[\gamma(t)].$$
Applying the Ekeland variational principle (see \cite{E}), there exists a Palais-Smale sequence   to the level $c$, i.~e. there exists
$\{u_k\}_{k\in \mathbb{N}}\subset W^{2,2}_0(\Omega)$ such that
\begin{enumerate}
\item $J_\lambda(u_k)\to c$ as $k\to\infty$, \item $J_\lambda'(u_k)\to 0$ in $W^{-2,2}$.
\end{enumerate}

{\it Claim.-} {\it If $\{u_k\}_{k\in \mathbb{N}}\subset W^{2,2}_0(\Omega)$ is a Palais-Smale sequence for $J_\lambda $ at the level $c$, then
there exists $C>0$ such that $||\Delta u_k||_2< C$.}

Since the results in section~\ref{preliminar} hold then if $u\in W^{2,2}_0(\Omega)$, integrating by parts we find that
\begin{equation}\begin{array}{rcl}
\displaystyle \int_\Omega  u \, \text{det} \left( D^2 u \right) \, dx &=& \displaystyle \int_\Omega u      \left[ (u_{x_1} \, u_{x_2
x_2})_{x_1}-(u_{x_1} \, u_{x_2 x_1})_{x_2} \right] dx \\ \quad \\ &-& \displaystyle \int_\Omega (u_{x_1})^2 \, u_{x_2 x_2} \, dx +
\int_\Omega u_{x_1} \, u_{x_2 x_1} \, u_{x_2} \, dx =\\ \quad \\ && \displaystyle 2 \int_\Omega u_{x_1} \, u_{x_1 x_2} \, u_{x_2} \,
dx+\int_\Omega u_{x_1} \, u_{x_2 x_1} \, u_{x_2} \, dx  =\\ \quad
\\ && \displaystyle  3\int_\Omega u_{x_1} \, u_{x_2 x_1} \, u_{x_2} \, dx.
\end{array}
\end{equation}
Then if $\{u_k\}_{k\in \mathbb{N}}\subset W^{2,2}_0(\Omega)$ is a Palais-Smale sequence for $J_\lambda $ at the level $c$ and calling  $
\langle y_k, u_k\rangle= \langle J_\lambda '(u_k), u_k\rangle$
$$ \begin{array}{ll}
\displaystyle c+o(1)=J_\lambda (u_k)- \frac 13 \, \langle J_\lambda '(u_k), u_k\rangle+ \frac 13 \, \langle y_k, u_k\rangle \ge
\\ \ge \displaystyle \left(\frac 12-\frac 13 \right) \int_\Omega |
\Delta u_k|^2 \, dx- \frac 13 \, ||y_k||_{H^{-2}} \left(\int_\Omega | \Delta u_k|^2 \, dx \right)^{\frac 12} \\ \displaystyle - \frac 23 \,
\lambda \, C_S \, ||f||_{L^1} \left(\int_\Omega | \Delta u_k|^2 \, dx \right)^{\frac 12} ,
\end{array}
$$
where $C_S$ is a suitable Sobolev constant. This inequality implies that the sequence is bounded.

By using Lemma \ref{lemma:PS}, $J_\lambda$ satisfies the Palais-Smale condition to the level $c$. Therefore
\begin{enumerate}
\item $J_\lambda(u_*) = \lim\limits_{k\to\infty} J_\lambda(u_k)=c$ (and then $u_*$ is different from the local minimum, as in this case
    the
    value of the functional at this point is positive while in the other one was negative). \item $J'_\lambda(u_*)=0$, thus
$$\Delta^2 u_*=\det(D^2 u_*)+ \lambda f,\,\, u_*\in W^{2,2}_0(\Omega).$$
\end{enumerate}
In other words $u_*$ is a {\it mountain pass type} solution to the problem \eqref{ProD}.
\end{proof}

\begin{remark} Notice  that  we cannot directly conclude that a  bounded Palais-Smale sequence gives a solution  in the
distributional sense; indeed, we would need the convergence property $$\det(D^2 u_k)\rightharpoonup \det(D^2 u_*) \hbox{ at least  in }
L^1(\Omega).$$ To have this property up to passing to a subsequence we need almost everywhere convergence (see the result by Jones and
Journ\'e in \cite{jones-journe}). Notice that a. e. convergence for the second derivatives is only known after the proof of Lemma
\ref{lemma:PS}.
\end{remark}

\section{Some existence results including Navier boundary conditions}
\label{nbc}

We will find a solution to our problem with Navier boundary conditions
\begin{equation}\label{Pro2}
\left\{\begin{array}{rcl} \Delta^2 u&=&\text{det} \left( D^2 u \right) +\lambda f, \qquad x\in \Omega\subset\mathbb{R}^2, \\ u=0, &\,& \Delta
u =0\hbox{ on }\partial \Omega,
\end{array}
\right.
\end{equation}
and also with Dirichlet boundary conditions
\begin{equation}\label{Pro20}
\left\{\begin{array}{rcl} \Delta^2 u&=&\text{det} \left( D^2 u \right) +\lambda f, \qquad x\in \Omega\subset\mathbb{R}^2, \\ u=0, &\,&
\dfrac{\partial u}{\partial n}=0\hbox{ on }\partial \Omega.
\end{array}
\right.
\end{equation}

In this section we will prove the existence of at least one solution to problems~(\ref{Pro2}) and (\ref{Pro20}) by means of fixed point
methods.

First of all we need the following technical result

\begin{lemma}\label{lemmanaviert}
For any functions $v_1, v_2 \in W^{1,2}(\Omega)$ and $v_3 \in W_0^{1,2}(\Omega) \cap W^{2,2}(\Omega)$ the following equality is fulfilled
\begin{equation}\label{lemmanavier}
\int \text{det} \left(\nabla v_1, \nabla v_2 \right) v_3 \, dx = \int v_1 \, \nabla v_2 \cdot \nabla^\perp v_3 \, dx,
\end{equation}
where $\nabla^\perp v_3 = \left( \partial_{x_2} v_3, -\partial_{x_1} v_3 \right)$.
\end{lemma}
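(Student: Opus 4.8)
The plan is to first check the identity for smooth $v_1,v_2$ by writing the Jacobian $\det(\nabla v_1,\nabla v_2)$ in divergence form and integrating by parts, and then to remove the regularity by a density argument, the two-dimensional Sobolev embedding doing all the work.

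\textbf{Step 1 (smooth case).} For $v_1,v_2\in\mathcal C^\infty(\overline\Omega)$ one has the pointwise identity
$$\det(\nabla v_1,\nabla v_2)=(v_1)_{x_1}(v_2)_{x_2}-(v_1)_{x_2}(v_2)_{x_1}=\partial_{x_1}\!\big(v_1\,(v_2)_{x_2}\big)-\partial_{x_2}\!\big(v_1\,(v_2)_{x_1}\big),$$
since the two terms $v_1\,(v_2)_{x_1x_2}$ produced by the product rule cancel. Multiplying by $v_3$ and applying the divergence theorem,
$$\int_\Omega\det(\nabla v_1,\nabla v_2)\,v_3\,dx=\int_{\partial\Omega}\big(v_1(v_2)_{x_2}\nu_1-v_1(v_2)_{x_1}\nu_2\big)v_3\,ds-\int_\Omega\big(v_1(v_2)_{x_2}(v_3)_{x_1}-v_1(v_2)_{x_1}(v_3)_{x_2}\big)dx,$$
where $\nu=(\nu_1,\nu_2)$ is the outward normal. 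The boundary integral vanishes because $v_3$ has zero trace on $\partial\Omega$ — this is the only place the hypothesis $v_3\in W^{1,2}_0(\Omega)$ is used — and the remaining volume integral is exactly $\int_\Omega v_1\,\nabla v_2\cdot\nabla^\perp v_3\,dx$, so \eqref{lemmanavier} holds for smooth $v_1,v_2$. (One may first take $v_3\in\mathcal C_0^\infty(\Omega)$ and then pass to $v_3\in W^{1,2}_0(\Omega)\cap W^{2,2}(\Omega)$ by density, both sides being continuous in $v_3$ in the $W^{1,2}$ norm once $v_1,v_2$ are fixed and smooth.)

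\textbf{Step 2 (density in $v_1,v_2$).} Since $\partial\Omega$ is smooth, pick $v_1^n,v_2^n\in\mathcal C^\infty(\overline\Omega)$ with $v_i^n\to v_i$ in $W^{1,2}(\Omega)$. In dimension two $W^{1,2}(\Omega)\hookrightarrow L^q(\Omega)$ for every $q<\infty$, so $v_1^n\to v_1$ in every such $L^q$, while $\nabla v_2^n\to\nabla v_2$ in $L^2(\Omega)$; also $v_3\in W^{2,2}(\Omega)\hookrightarrow L^\infty(\Omega)$ and $\nabla^\perp v_3\in L^2(\Omega)$. Then $\det(\nabla v_1^n,\nabla v_2^n)\to\det(\nabla v_1,\nabla v_2)$ in $L^1(\Omega)$ (a product of $L^2$-convergent factors), so the left-hand sides converge because $v_3\in L^\infty$. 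For the right-hand sides, the decomposition $v_1^n\nabla v_2^n-v_1\nabla v_2=(v_1^n-v_1)\nabla v_2^n+v_1(\nabla v_2^n-\nabla v_2)$ together with H\"older gives $v_1^n\nabla v_2^n\to v_1\nabla v_2$ in $L^r(\Omega)$ with $\tfrac1r=\tfrac1q+\tfrac12$; scalar-multiplying by $\nabla^\perp v_3\in L^2(\Omega)$ yields convergence of the integrands in $L^s(\Omega)$ with $s>1$, hence in $L^1(\Omega)$. Letting $n\to\infty$ in the identity of Step 1 gives \eqref{lemmanavier}, and the same estimates show both integrands in \eqref{lemmanavier} lie in $L^1(\Omega)$, so the statement is meaningful.

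\textbf{Expected main difficulty.} The algebraic step is trivial; the only genuine point is the weak regularity of $v_1,v_2$. The delicate object is the triple product $v_1\,\nabla v_2\cdot\nabla^\perp v_3$, in which no single factor is bounded: one must spend the two-dimensional Sobolev embedding to place $v_1$ in every $L^q$, $q<\infty$, so that the product still sits in some $L^s$ with $s>1$ and the limit passage in Step 2 goes through. Everything else is a standard integration by parts.
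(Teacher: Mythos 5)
Your proof takes the same route as the paper: write $\det(\nabla v_1,\nabla v_2)$ in divergence form, integrate by parts using the vanishing trace of $v_3$, and remove the smoothness of $v_1,v_2$ by a density argument powered by the two-dimensional Sobolev embedding. The paper merely performs the two density passages in the opposite order (first in $v_1,v_2$, at the level of the distributional identity $\mathrm{Det}=\det$ with a reference to M\"uller, and then in $v_3$ from $C^1_0(\overline\Omega)$), so there is no substantive difference of method.

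There is, however, one exponent slip in your Step 2 that as written would fail. If $v_1^n\nabla v_2^n\to v_1\nabla v_2$ in $L^r(\Omega)$ with $\tfrac1r=\tfrac1q+\tfrac12$, then pairing with $\nabla^\perp v_3\in L^2(\Omega)$ only places the product in $L^s$ with $\tfrac1s=\tfrac1q+1>1$, i.e. $s<1$, which gives no control on the integrals. The fix is immediate and uses a hypothesis you left on the table: $v_3\in W^{2,2}(\Omega)$ gives $\nabla v_3\in W^{1,2}(\Omega)\hookrightarrow L^p(\Omega)$ for every $p<\infty$ in dimension two, so choosing $p$ and $q$ large enough makes $\tfrac1q+\tfrac12+\tfrac1p<1$; this yields both the $L^1$-convergence of the integrands on the right-hand side and the absolute convergence of the limiting integral. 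With that correction the argument is complete and matches the paper's.
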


\begin{proof}
By Sobolev embedding we know $v_3$ is bounded in $L^\infty(\Omega)$ and consequently the left hand side of~(\ref{lemmanavier}) is well
defined. Now we take this expression and operate
\begin{eqnarray}\nonumber
\int \text{det} \left(\nabla v_1, \nabla v_2 \right) v_3 \, dx = \int \nabla \cdot \left( v_1 \partial_{x_2} v_2, -v_1
\partial_{x_1} v_2 \right) v_3 \, dx = \\
- \int \left( v_1 \partial_{x_2} v_2, -v_1
\partial_{x_1} v_2 \right) \cdot \nabla v_3 \, dx = \int v_1 \, \text{det}
\left( \nabla v_2, \nabla v_3 \right) \, dx \\ \nonumber = \int v_1 \, \nabla v_2 \cdot \nabla^\perp v_3 \, dx.
\end{eqnarray}
The first equality is obviously correct for smooth functions, and its validity can be extended by approximation to functions $v_1, v_2 \in
W^{1,2}(\Omega)$ and $v_3 \in C^1_0(\overline{\Omega})$ by considering the divergence on the right hand side in the distributional
sense~\cite{muller}. The second equality is a consequence of the definition of weak derivative and the fact that $v_3$ is a traceless
function. In this moment we can conclude both equalities are valid for $v_3 \in W^{1,2}_0(\Omega) \cap W^{2,2}(\Omega)$ because
$C^1_0(\overline{\Omega})$ is dense in $W^{1,2}_0(\Omega)$. The third and fourth equalities come from simple manipulations of the integrands.
Sobolev embedding guarantees that the last three terms in this chain of equalities are well defined.
\end{proof}
Now we move to prove the main result of this section

\begin{theorem}\label{theornavier}
If $\lambda >0$ is small enough then:
\begin{itemize}
\item[{a)}] There exists  $u \in W_0^{1,2}(\Omega) \cap W^{2,2}(\Omega)$ solution to  problem~(\ref{Pro2}). \item[{b)}] There exists
    $u\in W_0^{2,2}(\Omega)$ solution to problem~(\ref{Pro20}).
\end{itemize}
\end{theorem}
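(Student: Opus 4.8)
\emph{Proof sketch.} The plan is to realize both solutions as fixed points of the solution operator of the associated linear biharmonic problem. Fix the energy space $X=W^{1,2}_0(\Omega)\cap W^{2,2}(\Omega)$ in case (a) and $X=W^{2,2}_0(\Omega)$ in case (b). Given $v\in X$, let $T(v)=u$ be the unique function in $X$ determined by
\begin{equation*}
\int_\Omega \Delta u\,\Delta \phi\,dx=\int_\Omega \det\left(D^2 v\right)\phi\,dx+\lambda\int_\Omega f\,\phi\,dx\qquad\text{for all }\phi\in X.
\end{equation*}
That this determines $u$ uniquely rests on the fact that $\|\Delta\cdot\|_{L^2(\Omega)}$ is an equivalent norm on $X$: for $W^{2,2}_0(\Omega)$ this is classical, and for $W^{1,2}_0(\Omega)\cap W^{2,2}(\Omega)$ on a smooth bounded planar domain it follows by factoring the biharmonic operator with Navier conditions through two Dirichlet Laplacians ($z=-\Delta u\in W^{1,2}_0(\Omega)$, then $u=(-\Delta)^{-1}z$) together with $L^2$ elliptic regularity. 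Thus, as soon as the right-hand side is shown to be a bounded linear functional on $X$, the Riesz representation theorem yields $T(v)\in X$, and any fixed point $u=T(u)$ is, after unwinding the identity, a distributional solution of $\Delta^2 u=\det(D^2 u)+\lambda f$ with the boundary conditions of~(\ref{Pro2}) (resp.~(\ref{Pro20})) encoded in the space $X$.

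The crucial point is the following quadratic estimate: there is $C=C(\Omega)>0$ such that, for all $v,v_1,v_2,\phi\in X$,
\begin{align*}
\left|\int_\Omega \det\left(D^2 v\right)\phi\,dx\right|&\le C\,\|v\|_{W^{2,2}}^2\,\|\phi\|_{W^{2,2}},\\
\left|\int_\Omega\left(\det\left(D^2 v_1\right)-\det\left(D^2 v_2\right)\right)\phi\,dx\right|&\le C\left(\|v_1\|_{W^{2,2}}+\|v_2\|_{W^{2,2}}\right)\|v_1-v_2\|_{W^{2,2}}\,\|\phi\|_{W^{2,2}}.
\end{align*}
Here Lemma~\ref{lemmanaviert} is the essential tool: writing $\det(D^2 v)=\det(\nabla v_{x_1},\nabla v_{x_2})$ and applying~(\ref{lemmanavier}) with $v_3=\phi$ turns the pairing into $\int_\Omega v_{x_1}\,\nabla v_{x_2}\cdot\nabla^\perp\phi\,dx$, which by H\"older and the two-dimensional Sobolev embeddings $W^{2,2}(\Omega)\hookrightarrow W^{1,4}(\Omega)$ and $W^{1,2}(\Omega)\hookrightarrow L^4(\Omega)$ is bounded by $\|v_{x_1}\|_{L^4}\|\nabla v_{x_2}\|_{L^2}\|\nabla\phi\|_{L^4}\le C\|v\|_{W^{2,2}}^2\|\phi\|_{W^{2,2}}$; a direct $L^1$ bound on $\det(D^2 v)$ would be too weak for this. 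The second inequality follows by polarization, writing $\det(D^2 v_1)-\det(D^2 v_2)$ as a bilinear expression in $v_1+v_2$ and $v_1-v_2$ and treating each term as above. Since $W^{2,2}(\Omega)\hookrightarrow C(\overline{\Omega})$ in dimension two, an $f\in L^1(\Omega)$ also defines an element of $X^*$ with $\|\lambda f\|_{X^*}\le C\lambda\|f\|_{L^1}$, so the full right-hand side is an admissible functional and $\|T(v)\|_{W^{2,2}}\le C\left(\|v\|_{W^{2,2}}^2+\lambda\|f\|_{L^1}\right)$.

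It remains to run a fixed point argument. By the last estimate, $T$ maps the closed ball $\overline{B_R}=\{v\in X:\|v\|_{W^{2,2}}\le R\}$ into itself whenever $CR^2-R+C\lambda\|f\|_{L^1}\le 0$, which admits a positive solution $R$ precisely for $0<\lambda\le\lambda_0:=(4C^2\|f\|_{L^1})^{-1}$; choosing $R=R(\lambda)$ equal to the smaller root one has $R(\lambda)=O(\lambda)$ as $\lambda\to 0$. On $\overline{B_{R(\lambda)}}$ the difference estimate gives $\|T(v_1)-T(v_2)\|_{W^{2,2}}\le 2CR(\lambda)\|v_1-v_2\|_{W^{2,2}}$, and $2CR(\lambda)<1$ after shrinking $\lambda_0$ if necessary. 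Hence, for $0<\lambda<\lambda_0$, $T$ is a contraction of a complete metric space into itself, and Banach's fixed point theorem provides a (locally unique) $u\in X$ with $T(u)=u$; this $u$ solves~(\ref{Pro2}) in case (a) and~(\ref{Pro20}) in case (b), which is exactly the assertion. The main obstacle is the quadratic estimate on $\det(D^2 v)$: on $W^{2,2}(\Omega)$ this quantity is merely in $L^1$, and it is precisely the Jacobian/null-Lagrangian (compensated compactness) structure isolated in Lemma~\ref{lemmanaviert} that promotes it to a locally Lipschitz, quadratically bounded map into $X^*$, which is what makes the contraction feasible; a minor additional point is the norm equivalence and $L^2$ regularity for the Navier bilaplacian on the planar domain used to set up $T$ in case (a).
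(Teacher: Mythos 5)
Your argument is correct and follows essentially the same route as the paper: a Banach fixed point for the solution operator of the linear biharmonic problem, with the contraction estimate obtained by polarizing the difference of Hessian determinants and applying Lemma~\ref{lemmanaviert} together with H\"older and the planar Sobolev embeddings. The only cosmetic difference is that the paper runs the contraction on a small ball centered at the solution of the linear problem $\Delta^2 v=\lambda f$ rather than at the origin, which changes nothing essential.
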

\begin{proof}
As the proof is similar in both cases  we skip the details of the case $b)$.

We start considering the linear problems
\begin{equation}\label{17}
\left\{\begin{array}{rcl} \Delta^2 u_1&=&\text{det} \left( D^2 \varphi_1 \right) +\lambda f, \qquad x\in \Omega\subset\mathbb{R}^2, \\ u_1=0,
&\,& \Delta u_1 =0\hbox{ on }\partial \Omega,
\end{array}
\right.
\end{equation}
and
\begin{equation}\label{17mas1}
\left\{\begin{array}{rcl} \Delta^2 u_2&=&\text{det} \left( D^2 \varphi_2 \right) +\lambda f, \qquad x\in \Omega\subset\mathbb{R}^2, \\ u_2=0,
&\,& \Delta u_2 =0\hbox{ on }\partial \Omega,
\end{array}
\right.
\end{equation}
where $\varphi_1, \varphi_2 \in W_0^{1,2}(\Omega) \cap W^{2,2}(\Omega)$. Classical results guarantee the existence of weak solution to both
problems in $W_0^{1,2}(\Omega) \cap W^{2,2}(\Omega)$ for given $\varphi_1$ and $\varphi_2$. Subtracting both equations we find
\begin{equation}\label{subtraction}
\left\{\begin{array}{rcl} \Delta^2 \left( u_1-u_2 \right) &=& \text{det} \left( D^2 \varphi_1 \right) -\text{det} \left( D^2 \varphi_2
\right), \qquad x\in \Omega\subset\mathbb{R}^2, \\ u_1-u_2=0, &\,& \Delta \left( u_1-u_2 \right) =0\hbox{ on }\partial \Omega.
\end{array}
\right.
\end{equation}
Now we note
\begin{eqnarray}
\nonumber \text{det} \left( D^2 \varphi_1 \right) -\text{det} \left( D^2 \varphi_2 \right) &=& \text{det} \{ \nabla (\varphi_1)_{x_1} ,
\nabla [(\varphi_1)_{x_2} - (\varphi_2)_{x_2}] \} \\ &+& \text{det} \{\nabla [(\varphi_1)_{x_1} - (\varphi_2)_{x_1}], \nabla
(\varphi_2)_{x_2} \}. \label{equalityb}
\end{eqnarray}
Next we see that for any $w \in W_0^{1,2}(\Omega) \cap W^{2,2}(\Omega)$ we have
\begin{eqnarray}
\left\langle \text{det} \left( D^2 \varphi_1 \right),w \right\rangle - \left\langle \text{det} \left( D^2 \varphi_2 \right),w \right\rangle
=
\\ \nonumber \int \left[
\partial_{x_1} \varphi_1 \nabla(\partial_{x_2} \varphi_1 - \partial_{x_2} \varphi_2) \cdot \nabla^\perp w \right] dx \\ \nonumber - \int
\left[
\partial_{x_2} \varphi_2 \nabla(\partial_{x_1} \varphi_1 -
\partial_{x_1} \varphi_2) \cdot
\nabla^\perp w \right] dx  \\ =\int \left[ \partial_{x_1} \varphi_1 \nabla(\partial_{x_2} \varphi_1 -
\partial_{x_2} \varphi_2) - \partial_{x_2} \varphi_2 \nabla(\partial_{x_1} \varphi_1 -
\partial_{x_1} \varphi_2) \right] \cdot \nabla^\perp w \,\, dx,
\nonumber
\end{eqnarray}
where we have used equality~(\ref{equalityb}) together with Lemma~\ref{lemmanaviert}. From these equalities we have the following chain of
inequalities
\begin{eqnarray}\label{estimateb}
\left| \left\langle \text{det} \left( D^2 \varphi_1 \right),w \right\rangle - \left\langle \text{det} \left( D^2 \varphi_2 \right),w
\right\rangle \right| \le \\ \nonumber \int (|\nabla \varphi_1|+|\nabla \varphi_2|) \, \left| D^2 (\varphi_1 - \varphi_2) \right| \, |\nabla
w| \, dx \le \\ \nonumber \left( ||\nabla \varphi_1||_4 + ||\nabla \varphi_2||_4 \right) \, \left|\left| D^2(\varphi_1 - \varphi_2)
\right|\right|_2 \, ||\nabla w||_4.
\end{eqnarray}
Now we take equation~(\ref{subtraction}), we multiply it by $(u_1-u_2)$ and integrate by parts the left hand side twice to find
\begin{eqnarray}
\int \left| \Delta (u_1 - u_2) \right|^2 \, dx = \\ \nonumber \left| \left\langle \text{det} \left( D^2 \varphi_1 \right) - \text{det} \left(
D^2 \varphi_2 \right), u_1 - u_2 \right\rangle \right| \le \\ \nonumber \left( ||\nabla \varphi_1||_4 + ||\nabla \varphi_2||_4 \right) \,
\left|\left| D^2(\varphi_1 - \varphi_2) \right|\right|_2 \, ||\nabla (u_1 - u_2)||_4 \le \\ \nonumber C \left( ||\nabla \varphi_1||_4 +
||\nabla \varphi_2||_4 \right) \, \left|\left| \Delta(\varphi_1 - \varphi_2) \right|\right|_2 \, ||\Delta (u_1-u_2)||_2,
\end{eqnarray}
where we have used Sobolev and Poincar\'e inequalities and the embedding of the $L^p$ spaces on bounded domains on the last step, and
result~(\ref{estimateb}) on the previous one. Simplifying the last chain of inequalities we arrive at
\begin{equation}
\left|\left|\Delta(u_1-u_2)\right|\right|_2 \le C \left( ||\nabla \varphi_1||_4 + ||\nabla \varphi_2||_4 \right) \, \left|\left|
\Delta(\varphi_1 - \varphi_2) \right|\right|_2
\end{equation}
for some suitable constant $C$.

Now, by using the Sobolev embedding $||\nabla \varphi_i||_4 \le C ||\Delta \varphi_i||_2$ for $i=1,2$, we find
\begin{equation}\label{24}
\left|\left|\Delta(u_1-u_2)\right|\right|_2 \le C \left( ||\Delta \varphi_1||_2 + ||\Delta \varphi_2||_2 \right) \, \left|\left|
\Delta(\varphi_1 - \varphi_2) \right|\right|_2.
\end{equation}

Consider $v$ the solution to the problem
\begin{equation}\label{lineal}
\left\{
\begin{array}{rcl}
\Delta^2 v&=&\lambda f, \qquad x\in \Omega\subset\mathbb{R}^2, \\ v=0, &\,& \Delta v =0\hbox{ on }\partial \Omega.
\end{array}
\right.
\end{equation}
Notice that
$$||\Delta v||_2\leq \lambda ||f||_1,$$
and then the first norm is small when $\lambda$ is small.

If $\varphi_i\in B_\rho(v)=\{ \varphi\in W^{2,2}(\Omega)\cap W_0^{1,2}(\Omega)\,|\, ||\Delta(v-\varphi)||_2\le \rho\,\}$, then \eqref{24}
becomes
\begin{equation}\label{25}
\begin{array}{rcl}
\left|\left|\Delta(u_1-u_2)\right|\right|_2 &\le& 2 C \left( \rho+||\Delta v||_2 \right)\left|\left| \Delta(\varphi_1 - \varphi_2)
\right|\right|_2\\ \le 2 C \big(\rho&+&\lambda ||f||_1 \big) \left|\left| \Delta(\varphi_1 - \varphi_2) \right|\right|_2\le \dfrac 12
\left|\left| \Delta(\varphi_1 - \varphi_2) \right|\right|_2
\end{array}
\end{equation}
for suitable $\lambda$ and $\rho$. Moreover, if $\varphi_i\in B_\rho(v)$  and $u_i$ is the corresponding solution to either problem
\eqref{17} or \eqref{17mas1}, we find that
$$||\Delta(u_i-v)||_2^2\le \int_\Omega \left| \text{det} \left( D^2 \varphi_i \right) \right|(u_i-v) dx\le
S\int_\Omega \left| \text{det} \left( D^2 \varphi_i \right) \right| dx\, ||\Delta(u_i-v)||_2,$$ that is
$$||\Delta(u_i-v)||_2\le S\int_\Omega \left| \text{det} \left( D^2 \varphi_i \right) \right| dx\le C||\Delta \varphi_i||_2^2.$$
But we can compute that
$$||\Delta \varphi_i||_2^2\le 2(||\Delta (\varphi_i-v)||_2^2+||\Delta v||_2^2)\le 2(\rho^2+\lambda^2||f||_1^2)\le \frac{\rho}{C}$$
for $\rho$ and $\lambda$ small enough. Thus
\begin{equation}\label{banach}
\left|\left|\Delta(u_1-u_2)\right|\right|_2 \le 2 C \left(\rho+\lambda ||f||_1 \right) \left|\left| \Delta(\varphi_1 - \varphi_2)
\right|\right|_2.
\end{equation}
For $\varphi\in B_\rho(v)$, define $u$ as the solution to
$$\left\{\begin{array}{rcl} \Delta^2 u&=&\text{det} \left( D^2 \varphi \right) +\lambda f, \qquad x\in \Omega\subset\mathbb{R}^2, \\
u=0, &\,& \Delta u =0\hbox{ on }\partial \Omega,
\end{array}
\right.$$ and then the nonlinear operator
$$\begin{array}{rcl}
T: B_\rho(v)&\rightarrow& B_\rho(v)\\ \varphi &\rightarrow&  T(\varphi)=u.
\end{array}
$$
By using the estimates above and the classical Banach fixed point theorem, we find a unique fixed point $u=T(u)$, which is a solution to
problem \eqref{Pro2}.
\end{proof}

\begin{corollary}
The solution to problem~(\ref{Pro2}) (respectively to problem (\ref{Pro20})) is unique in the ball $B_\rho(v)\subset W^{2,2}(\Omega)\cap
W^{1,2}_0(\Omega)$.
\end{corollary}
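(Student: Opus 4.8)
The plan is simply to unwind the construction carried out in the proof of Theorem~\ref{theornavier}: the solution produced there is, by definition, the unique fixed point of the contraction $T$ on $B_\rho(v)$, and conversely any solution of~(\ref{Pro2}) lying in $B_\rho(v)$ must be a fixed point of $T$. Uniqueness then follows at once from the Banach fixed point theorem.

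First I would record that $B_\rho(v)$, equipped with the metric induced by $\varphi\mapsto\|\Delta\varphi\|_2$, is a closed subset of the Hilbert space $W^{2,2}(\Omega)\cap W^{1,2}_0(\Omega)$, and hence a complete metric space. For the values of $\rho$ and $\lambda$ fixed in the proof of Theorem~\ref{theornavier} we have, on the one hand, $T(B_\rho(v))\subseteq B_\rho(v)$, and on the other hand, by estimate~(\ref{banach}), that $T$ is Lipschitz on $B_\rho(v)$ with constant $2C(\rho+\lambda\|f\|_1)\le\frac12<1$. Thus $T$ has exactly one fixed point in $B_\rho(v)$.

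Next I would check that a solution of~(\ref{Pro2}) in $B_\rho(v)$ is the same thing as a fixed point of $T$. If $u\in B_\rho(v)$ is a weak solution of~(\ref{Pro2}), then, choosing $\varphi=u$ in the definition of $T$, both $u$ and $T(u)$ are weak solutions of the linear biharmonic problem $\Delta^2 w=\det(D^2 u)+\lambda f$ with Navier conditions; by uniqueness of weak solutions to this linear problem (the classical fact already invoked after~(\ref{17mas1})) we conclude $T(u)=u$. Conversely, every fixed point of $T$ solves~(\ref{Pro2}). Hence the solution is unique in $B_\rho(v)$. The argument for~(\ref{Pro20}) is identical, replacing the Navier conditions by the homogeneous Dirichlet conditions and the space $W^{2,2}(\Omega)\cap W^{1,2}_0(\Omega)$ by $W^{2,2}_0(\Omega)$, and using the corresponding estimate from part~$b)$ of the proof of Theorem~\ref{theornavier}.

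There is no real obstacle here: the corollary is essentially a reformulation of the contraction property established in the proof of Theorem~\ref{theornavier}, and the only point worth a word is the observation that membership in $B_\rho(v)$ — which carries the regularity $W^{2,2}(\Omega)\cap W^{1,2}_0(\Omega)$ built into its definition — is exactly what makes a solution amenable to the fixed point characterization.
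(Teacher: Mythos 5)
Your proposal is correct and matches the paper's intent exactly: the corollary is stated without a separate proof precisely because it is the uniqueness half of the Banach fixed point theorem applied to the contraction $T$ on $B_\rho(v)$ constructed in the proof of Theorem~\ref{theornavier}. Your added observation that any solution in $B_\rho(v)$ is a fixed point of $T$ (via uniqueness of the linear Navier/Dirichlet biharmonic problem) is the right way to make this completely explicit.
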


\begin{remark}
Notice that also the inhomogeneous Dirichlet problem can be solved by fixed point arguments. However in the homogenous Dirichlet problem with
this kind  of analysis we only find the trivial solution and not the mountain-pass type solution.
\end{remark}
\section{Further remarks and open problems}\label{further}
In this section we collect some further results that appear in the modelization by considering a more general functional than
\eqref{funcional de partida}, and performing different truncation arguments. We also quote some results in \cite{Los4} about the radial
setting and propose some open problems.

\subsection{Some insights from the radial case}

In \cite{Los4}, among other results, there  appears a numerical estimate  of the value of $\lambda$, or equivalently the size of the datum,
for which we have solvability of the  radial Dirichlet and Navier problems. The methods used are  those of the dynamical systems theory and a
shooting method of Runge-Kutta type.

It seems to be an open problem proving the existence of the corresponding threshold for the existence in general smooth domains.

\subsection{A subcritical quasilinear problem}
\label{subcriticalquasilinear} Let
\begin{equation}\label{funcional de partida2}
\mathcal{V}(u)= \int_\Omega \left(K_0+ K_1 H + \frac{K_2}{2} H^2 +\frac {K_3}{6} H^3\right)\sqrt{1+|\nabla u|^2}\, dx,
\end{equation}
we can yet select another problem that appears  by considering in the modelization the  functional (\ref{funcional de partida2}) with
$K_0=K_2=0$. The resulting equation is
\begin{equation}\label{qed}
\left\{\aligned \Delta (|\Delta u|^2) &= \quad\text{det} \, \left( D^2 \, u \right) + \lambda \, f \quad \text{in} \quad \Omega\subset
\mathbb{R}^2,\\ \left.u\right|_{\partial \Omega} &=0,\,\,\left.\frac{\partial u}{\partial n}\right|_{\partial \Omega}=0.
\endaligned
\right.
\end{equation}
The main difficulty associated with this problem is that it is not elliptic in general. This is the same problem that appears for functional
\eqref{funcional de partida2} when we assume that $K_2=K_3=0$. Then the Euler-Lagrange equation is
$$\Delta u+\det( D^2 u)=f$$
and performing the change of variable $v=u+\dfrac{|x|^2}{2}$ (personal advise of N. Trudinger) we obtain the Monge-Amp\`{e}re equation
$$\det(D^2 v)=f+1,$$
which is elliptic only if $f+1\ge 0$.

It seems to be an open problem finding the condition on the data $f$ for the solvability of problem \eqref{qed}.

A  problem formally close to \eqref{qed} is the following,
\begin{equation}\label{qed1}
\left\{\aligned \Delta \left(|\Delta u| \, \Delta u \right) &= \quad\text{det} \, \left( D^2 \, u \right) + \lambda \, f \quad \text{in}
\quad \Omega\subset \mathbb{R}^2,\\ \left.u\right|_{\partial \Omega} &=0,\,\,\left.\frac{\partial u}{\partial n}\right|_{\partial \Omega}=0,
\endaligned
\right.
\end{equation}
which is  elliptic. Despite it is a quasilinear problem, it is subcritical and then the variational formulation is easier. Indeed the energy
functional is given by
\begin{equation}\label{qenergy}
J(u)=\frac{1}{3} \int_\Omega |\Delta u|^3 \, dx-\int_\Omega u_{x_1} \, u_{x_2} \, u_{x_1 x_2} \, dx - \lambda \int f \, u \, dx,
\end{equation}
and it is defined in $W^{2,3}_0(\Omega)$. In this case, finding critical points could be done by means of the same arguments as before and
$f\in L^1(\Omega)$. For zeroth order nonlinearities this program was carried out in~\cite{BGP}.

As in former cases, the problem
\begin{equation}\label{qen}
\left\{\aligned \Delta \left( |\Delta u| \, \Delta u \right) &= \text{det} \, \left( D^2 \, u \right) + \lambda \, f \quad \text{in} \quad
\Omega\subset \mathbb{R}^2, \\ \left.u\right|_{\partial \Omega} &=0,\,\,\left.\Delta u\right|_{\partial \Omega} =0.
\endaligned
\right.
\end{equation}
can be   solved  using a convenient fixed point argument.

The variational setting does not work for the same reasons as before, namely that the critical points of the functional do not correspond to
solutions of our problem.

\subsection{An extension of the Kardar-Parisi-Zhang equation with Navier boundary conditions}
\label{kpznavier}
 A nonvariational high order problem which is the  counterpart of the Kardar-Parisi-Zhang equation in order 2,
is the following problem
\begin{equation}\label{kpz4}
\left\{\aligned \Delta^2 u&=|\nabla(\Delta u)|^2   + \lambda \, f \quad \text{in} \quad \Omega\subset \mathbb{R}^N, \\
\left.u\right|_{\partial \Omega} &=0,\,\,\left.\Delta u\right|_{\partial \Omega} =0.
\endaligned
\right.
\end{equation}
Notice that herein we will quote results that are valid for any arbitrary spatial dimension $N$. We assume that $f\in L^m(\Omega)$ with $m\ge
\frac N2$, $f(x)\ge 0$.

We call $-\Delta u=v$. Then we find the equivalent system given by
\begin{equation}\label{system}
\left\{\aligned -\Delta u &= v    \quad \text{in} \quad \Omega, \quad  \left.u\right|_{\partial \Omega} =0, \\ -\Delta v&=|\nabla
v|^2+\lambda f \quad \text{in} \quad \Omega,\quad \left.v\right|_{\partial \Omega} =0.
\endaligned
\right.
\end{equation}
The second equation has been studied extensively  (see for instance \cite{ireneo1} and \cite{ireneo2} for the parabolic case) where necessary
and sufficient conditions for the existence of solutions are established; moreover, a complete characterization of the solutions is
presented. In particular if $v\in H^1_0(\Omega)$ is a solution to the second equation of \eqref{system}, then it verifies that
$$e^{\delta v}-1\in H^1_0(\Omega),\hbox{   for all    } \delta\in \left( 0,\frac 12 \right),$$
and for $\delta=\frac 12$ we reach a bounded solution. Then we obtain the following result for free.
\begin{theorem}\label{th:kpz4}
Problem \eqref{kpz4} has infinitely many solutions in $W^{3,2}(\Omega)$ such that if $-\Delta u=v$ then
$$e^{\delta v}-1\in H^1_0(\Omega),\hbox{   for all    } \delta\in \left( 0,\frac 12 \right).$$
If $\delta= \frac 12$ then $u\in W^{3, p}(\Omega)$ for all $p<\infty$.
\end{theorem}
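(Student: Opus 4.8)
The plan is to exploit the equivalent system~\eqref{system} and transfer the known existence theory for the second equation (the elliptic Kardar--Parisi--Zhang equation $-\Delta v = |\nabla v|^2 + \lambda f$) to the fourth-order problem~\eqref{kpz4} by simply inverting the Laplacian once more. First I would invoke the results of~\cite{ireneo1,ireneo2}: under the hypothesis $f\in L^m(\Omega)$ with $m\ge N/2$ and $f\ge 0$, and for $\lambda>0$ small enough, the second equation of~\eqref{system} admits infinitely many solutions $v\in H^1_0(\Omega)$, each of which satisfies the exponential integrability bound $e^{\delta v}-1\in H^1_0(\Omega)$ for every $\delta\in(0,1/2)$, with a bounded solution attained in the limit $\delta=1/2$. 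The infinitely many solutions come from the characterization in those references (the branch structure of solutions as a function of $\lambda$). This step is not an obstacle: it is a direct citation.

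Next, for each such $v$, I would define $u$ as the unique solution in $H^1_0(\Omega)$ of the linear Poisson problem $-\Delta u = v$ in $\Omega$, $u|_{\partial\Omega}=0$. Then by construction the pair $(u,v)$ solves the system~\eqref{system}, hence $u$ solves~\eqref{kpz4} with the stated Navier boundary conditions $u|_{\partial\Omega}=0$, $\Delta u|_{\partial\Omega}=-v|_{\partial\Omega}=0$. Distinct $v$'s give distinct $u$'s (since $v=-\Delta u$ is determined by $u$), so we obtain infinitely many solutions $u$. The regularity claim $u\in W^{3,2}(\Omega)$ then follows from elliptic regularity for the Laplacian: since $v\in H^1_0(\Omega)\subset W^{1,2}(\Omega)$, the Calderón--Zygmund estimate for $-\Delta u=v$ gives $u\in W^{3,2}(\Omega)$ (two derivatives gained on top of $v\in W^{1,2}$). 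Similarly, when $\delta=1/2$ one has $v\in L^\infty(\Omega)$, indeed $v\in W^{1,p}(\Omega)$ for all $p<\infty$ by the improved regularity in~\cite{ireneo1,ireneo2}, and then $-\Delta u=v$ with $v\in W^{1,p}$ yields $u\in W^{3,p}(\Omega)$ for all $p<\infty$ by the same elliptic regularity.

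The only point requiring a little care is making sure the right hand side $|\nabla(\Delta u)|^2$ in~\eqref{kpz4} is genuinely equal to $|\nabla v|^2$ in the appropriate sense: this is immediate since $\Delta u = -v$ pointwise a.e.\ and $v\in H^1(\Omega)$, so $\nabla(\Delta u) = -\nabla v\in L^2(\Omega)$ and the equation $-\Delta v=|\nabla v|^2+\lambda f$ rewrites as $\Delta^2 u = |\nabla(\Delta u)|^2+\lambda f$ in $\mathcal D'(\Omega)$. I expect the main (mild) obstacle to be purely expository: correctly quoting the hypotheses under which~\cite{ireneo1,ireneo2} furnish infinitely many solutions with the exponential integrability, and observing that the decoupling into the triangular system~\eqref{system} loses nothing because the first equation is linear and unconditionally solvable. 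No fixed point argument or variational machinery is needed here; the theorem is, as the text says, obtained ``for free'' from the second-order theory.
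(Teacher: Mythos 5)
Your proposal is correct and follows essentially the same route as the paper: reduce to the triangular system \eqref{system}, quote \cite{ireneo1,ireneo2} for the existence, multiplicity and exponential integrability of solutions $v$ to the second equation, and then recover $u$ from the linear Poisson problem $-\Delta u=v$ with elliptic regularity giving $u\in W^{3,2}(\Omega)$ (and $u\in W^{3,p}(\Omega)$ for all $p<\infty$ when $v$ is bounded). The paper presents exactly this argument, declaring the theorem obtained ``for free,'' and your added remarks on why $|\nabla(\Delta u)|^2=|\nabla v|^2$ makes sense and why distinct $v$ give distinct $u$ are consistent with its reading.
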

Notice that just the regularity of the solution gives sense to the second member of the equation defining boundary value
problem~(\ref{kpz4}).

On the contrary the equation in \eqref{kpz4} with Dirichlet data,  provides an interesting set of  open problems.

\subsection{Other  boundary conditions}\label{Neumann}
It would be interesting to analyze the Neumann problem, that is,
\begin{equation}\label{neumann}
\left\{\aligned \Delta^2 u  &= \quad\text{det} \, \left( D^2 \, u \right) + \lambda \, f \quad \text{in} \quad \Omega\subset \mathbb{R}^2,\\
\left.\dfrac{\partial u}{\partial n}\right|_{\partial \Omega} &=0,\,\,\left.\frac{\partial \Delta u}{\partial n}\right|_{\partial \Omega}=0.
\endaligned
\right.
\end{equation}
Also it seems to be interesting to analyze inhomogeneous boundary conditions and, perhaps the more interesting  from the physical view point,
 periodic boundary conditions.

We leave  this analysis for the future.

\end{document}